\newcommand{\norm}[1]{\lVert#1\rVert}
\newcommand{\R}[0]{\mathbb{R}}
\newcommand{\Z}[0]{\mathbb{Z}}
\newcommand\funding[1]{\protect\\ \hspace*{15.37pt}{\bfseries Funding:} #1}
\newcommand{\email}[1]{\protect\href{mailto:#1}{#1}}
\newtheorem{thm}{Theorem}
\newtheorem{lem}{Lemma}
\title{On the kissing number of the cross-polytope
}
\author{Niklas Miller\thanks{Department of Mathematics and Systems Analysis, Aalto University, Finland, E-mail: \email{niklas.miller@aalto.fi}.\funding{This work has been supported by the Research Council of Finland under Grant No. 351271 (PI~C. Hollanti).}}}
\date{\today}
\begin{document}

\maketitle

\begin{abstract}
     A new upper bound $\kappa_T(K_n)\leq 2.9162^{(1+o(1))n}$ for the translative kissing number of the $n$-dimensional cross-polytope $K_n$ is proved, improving on Hadwiger's bound $\kappa_T(K_n)\leq 3^n-1$ from 1957. Furthermore, it is shown that there exist kissing configurations satisfying $\kappa_T(K_n)\geq 1.1637^{(1-o(1))n}$, which improves on the previous best lower bound $ \kappa_T(K_n)\geq 1.1348^{(1-o(1))n}$ by Talata. It is also shown that the lattice
    kissing number satisfies $\kappa_L(K_n)< 12(2^n-1)$ for all $n\geq 1$, and that the lattice $D_4^+$ is the unique lattice, up to signed permutations of coordinates, attaining the maximum lattice kissing number $\kappa_L(K_4)=40$ in four dimensions.
\end{abstract}

\smallskip
\noindent \qquad \ \textbf{\small{Key words.}} Kissing number, Cross-polytope, Lattice.

\smallskip
\noindent \qquad \ \textbf{\small{MSC Codes.}} 52C17, 11H31.

\section{Introduction}

The translative \emph{kissing number problem} asks for the maximum number of non-overlapping translates of a given convex body that can touch a central one. The kissing number of the $n$-dimensional $\ell^2$-ball $B_n=\{x\in\mathbb{R}^n:\sum_{i=1}^n x_i^2\leq 1\}$ is known in dimensions $1$--$4$, $8$ and $24$ \cite{schutte1952problem,musin2008kissing,levenshtein1979bounds,odlyzko1979new}, and the asymptotic bounds $2^{0.2075{(1+o(1))n}}\leq \kappa_T(B_n)\leq 2^{0.401(1+o(1))n}$ follow from the works of  Wyner \cite{wyner1965capabilities}, Kabatiansky and Levenshtein \cite{kabatiansky1978bounds} and Levenshtein \cite{levenshtein1979bounds}. The kissing number problem for convex bodies different from $B_n$ are equally interesting and important. See \cite{Swanepoel2018} for a survey on kissing numbers of convex bodies and related problems.

In this article, we focus on the kissing number problem for the $n$-dimensional cross-polytope $K_n=\{x\in \mathbb{R}^n:\sum_{i=1}^n |x_i|\leq 1\}$, \emph{i.e.}, the convex hull of the $n$ $\pm$ pairs of standard basis vectors in $\mathbb{R}^n$. The cross-polytope is, together with the cube and the simplex, the only regular polytope in dimension $n\geq 5$. The exact value of $\kappa_T(K_n)$ is known only in dimensions $1$--$3$ where it is $2$, $8$ and $18$ \cite{larman1999kissing}, respectively. In each of these dimensions, the kissing configuration consists of the shortest non-zero vectors of a lattice -- the $D_n$ lattice, defined as the set of elements of $\mathbb{Z}^n$ whose coordinates sum up to an even number. In dimension four, the shortest vectors of the $D_4$ lattice gives a kissing configuration with $32$ vectors. However, the lattice $D_4$ together with the translate $D_4+(\frac{1}{2},\frac{1}{2},\frac{1}{2},\frac{1}{2})$ produce a kissing configuration with $40$ vectors, which we show is optimal in Theorem \ref{thm:cross-polytope-4}. Moreover, the optimal lattice kissing configuration is unique up to equivalence (see Section \ref{sec:preliminaries} for definitions).

Using a volume argument, Hadwiger \cite{hadwiger1957treffanzahlen} proved that the bound $\kappa_T(C)\leq 3^n-1$ holds for any $n$-dimensional convex body, a bound which remains the best general upper bound for convex bodies. In Section \ref{sec:asymptotics}, we use a variant of Blichfeldt's enlargement method \cite{Blichfeldt1929} to show that

\begin{thm}\label{thm:main0}
    $\kappa_T(K_n)\leq 2.9162^{(1+o(1))n}$ as $n\to\infty$.
\end{thm}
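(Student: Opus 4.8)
The plan is to bound $\kappa_T(K_n)$ by a volume (pigeonhole) estimate for the kissing translates after a mild enlargement, in the spirit of Blichfeldt's enlargement method; the one substantial ingredient will be an estimate for how many translates of a slightly enlarged cross-polytope can meet at a common point.

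I would first reformulate the problem. If $N=\kappa_T(K_n)$ and $K_n+v_1,\dots,K_n+v_N$ are nonoverlapping translates each touching $K_n$, then, $K_n$ being the unit $\ell^1$-ball, $\norm{v_i}_1=2$ and $\norm{v_i-v_j}_1\ge 2$ for $i\ne j$. Adjoining $v_0=0$, the set $P=\{v_0,\dots,v_N\}\subseteq 2K_n$ has all pairwise $\ell^1$-distances $\ge 2$, so the balls $K_n+v_i$ ($0\le i\le N$) have disjoint interiors and lie in $3K_n$; this is Hadwiger's $N\le 3^n-1$. To improve it, fix $\lambda\in(1,2)$ and replace each ball by $\lambda K_n+v_i$. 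These overlap, but they still lie in $(\lambda+2)K_n$, and since $\norm{v_i}_1=2$ for $i\ge1$, each enlarged body with $i\ge1$ misses $\operatorname{int}\!\big((2-\lambda)K_n\big)$. Writing
\[
\mu(\lambda)=\max_{x\in\R^n}\ \#\bigl\{\,0\le i\le N:\ \norm{x-v_i}_1\le\lambda\,\bigr\}
\]
for the maximal multiplicity of the covering by the bodies $\lambda K_n+v_i$ -- equivalently, the largest number of points of $P$ in one translate of $\lambda K_n$ -- and integrating $\sum_{i=0}^N\mathbf 1_{\lambda K_n+v_i}\le\mu(\lambda)$ over $(\lambda+2)K_n$ (the bound improving to $\le1$ on $(2-\lambda)K_n$), with $\operatorname{vol}(rK_n)=r^n\operatorname{vol}(K_n)$, I obtain
\[
(N+1)\lambda^n\ \le\ (2-\lambda)^n+\mu(\lambda)\bigl((\lambda+2)^n-(2-\lambda)^n\bigr)\ \le\ \mu(\lambda)\,(\lambda+2)^n ,
\]
so that $N\le\mu(\lambda)\,(1+2/\lambda)^n$.

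The heart of the proof is then an upper bound for $\mu(\lambda)$, i.e.\ for the quantity $A_n(\lambda)$ defined as the maximal cardinality of a subset of $\lambda K_n$ with pairwise $\ell^1$-distances $\ge2$. The trivial volume bound $A_n(\lambda)\le(\lambda+1)^n$ will not do: it gives only $N\le\big(\min_{\lambda}(\lambda+3+2/\lambda)\big)^{n+o(n)}=(3+2\sqrt2)^{n+o(n)}$, worse than Hadwiger. One must instead use the structure of near-extremal configurations for $A_n(\lambda)$: for points $w_1,\dots,w_k\in\lambda K_n$ with pairwise $\ell^1$-distances $\ge2$, the identity $\norm{w-w'}_1=\norm{w}_1+\norm{w'}_1-2\,\omega(w,w')$, with $\omega(w,w')\ge0$ the coordinatewise overlap, forces $\omega(w_i,w_j)\le\lambda-1$ for all $i\ne j$, reducing the task to bounding a family of vectors of bounded $\ell^1$-norm with small pairwise overlap -- a problem governed by (and, for the near-optimal code-like configurations, essentially equivalent to) bounds for constant-weight binary codes. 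My approach would be to push this through to a bound $A_n(\lambda)\le\beta(\lambda)^{(1+o(1))n}$ with an explicit $\beta(\lambda)$ satisfying $\beta(\lambda)\downarrow1$ as $\lambda\downarrow1$ and, crucially, $\beta'(1)<\tfrac{2}{3}$. I expect this step -- converting the continuous multiplicity problem into a sharp discrete one and ruling out configurations that beat the code-like ones -- to be the main obstacle; the enlargement argument above is routine by comparison.

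Granting such an estimate, the conclusion is immediate: $N\le\big(\beta(\lambda)(1+2/\lambda)\big)^{(1+o(1))n}$ for every $\lambda\in(1,2)$, whence
\[
\kappa_T(K_n)\ \le\ \Bigl(\,\inf_{1<\lambda<2}\ \beta(\lambda)\bigl(1+\tfrac{2}{\lambda}\bigr)\,\Bigr)^{(1+o(1))n}.
\]
Since $1+2/\lambda$ decreases from $3$ while $\beta(\lambda)$ increases from $1$, and $\tfrac{d}{d\lambda}\big[\beta(\lambda)(1+2/\lambda)\big]\big|_{\lambda=1}=3\beta'(1)-2<0$, the product dips below $3$ just past $\lambda=1$ and the infimum is attained at an interior $\lambda^\ast$; a numerical evaluation yields the stated $2.9162$. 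The lower-order effects -- the $(2-\lambda)^n$ term, polynomial factors in $A_n(\lambda)$, and the gap between $A_n(\lambda)$ and $\beta(\lambda)^n$ -- are all absorbed by the $(1+o(1))$ in the exponent.
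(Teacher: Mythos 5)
Your reduction to the multiplicity bound is set up correctly (with $P=\{v_0,\dots,v_N\}\subseteq 2K_n$ having pairwise $\ell^1$-distances $\ge 2$, the inequality $N\le \mu(\lambda)(1+2/\lambda)^n$ does follow by integrating the indicator sum over $(\lambda+2)K_n$), but the proof has a genuine gap exactly where you flag it: you never establish the bound $A_n(\lambda)\le\beta(\lambda)^{(1+o(1))n}$ with $\beta(1)=1$ and $\beta'(1)<2/3$, and nothing in your sketch makes it plausible that the overlap identity $\norm{w-w'}_1=\norm{w}_1+\norm{w'}_1-2\omega(w,w')$ reduces this to a tractable constant-weight-code problem --- the $w_i$ are arbitrary real vectors, and bounding point sets in $\lambda K_n$ with pairwise $\ell^1$-distances $\ge 2$ is itself a packing problem of essentially the same nature and difficulty as the original kissing problem (indeed $A_n(2)\ge\kappa_T(K_n)$). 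Since the entire improvement over Hadwiger hinges on this unproved estimate, and the specific constant $2.9162$ cannot be extracted from a $\beta$ you have not constructed, the argument as written does not prove the theorem.

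The paper avoids the $\ell^1$-multiplicity problem altogether. It splits the configuration $X$ into the set $X'$ of points having at least $\lfloor cn\rfloor$ coordinates of absolute value $\ge b/n$ and its complement. For $X\setminus X'$ it projects away a coordinate set carrying little $\ell^1$-mass and runs Hadwiger's volume comparison in dimension $\lfloor cn\rfloor$, giving $\binom{n}{\lfloor cn\rfloor}\bigl(\frac{3-2a}{1-2a}\bigr)^{\lfloor cn\rfloor}$. For $X'$ it passes to the inscribed Euclidean ball $\frac{1}{\sqrt n}B_n\subset K_n$ and uses Blichfeldt's quadratic density $\delta_n(r)=1-2nr^2$, whose pointwise bound $\sum_{x}\delta_n(\norm{y-x}_2)\le 1$ follows from the positive-definiteness identity for the Euclidean norm --- this is precisely the multiplicity control that your approach lacks, obtained for free from the $\ell^2$ structure rather than from a hard $\ell^1$ coding bound. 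The price is that one must control the volume of the enlarged Euclidean balls protruding outside $RK_n$ via spherical-cap estimates, and that is exactly why the split into $X'$ and $X\setminus X'$ is needed: only centers with many non-negligible coordinates stay far enough from the facet hyperplanes of $RK_n$. If you want to salvage your route, the missing lemma on $A_n(\lambda)$ is the whole theorem; I would instead redirect the enlargement step through the inscribed Euclidean ball as the paper does.
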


This seems to be the first improvement on Hadwiger's bound for $\ell^p$-balls different from $B_n$ (note that $\kappa_T(Q_n)=3^n-1$, where $Q_n$ is the $n$-cube). The lattice kissing number bound $\kappa_L(C)\leq 2(2^n-1)$ for strictly convex $C$ is a classical bound by Minkowski \cite{minkowski1961diophantische}, see also \cite[Theorem 30.2]{gruber2007convex}. While $K_n$ is not strictly convex, it is possible to obtain an equally strong inequality for $\kappa_L(K_n)$, up to a constant factor. See Section \ref{sec:lattice_kissing}.

\section{Preliminaries}
\label{sec:preliminaries}

In this section, we recall basic facts about convex bodies and kissing configurations. By a convex body $C$, we mean a compact and convex subset of $\mathbb{R}^n$ with non-empty interior. The set of centrally symmetric convex bodies in $\mathbb{R}^n$ is denoted $\mathcal{K}_n^0$. The symmetry group of $C\in\mathcal{K}_n^0$ is defined as the group $\mathcal{G}_C=\{T\in O_n(\mathbb{R}):T(C)=C\}$, where $O_n(\mathbb{R})$ is the orthogonal group consisting of distance-preserving linear maps $\mathbb{R}^n\to\mathbb{R}^n$. For instance, the symmetry group $S(K_n)$ of the unit cross-polytope $K_n=\{x\in\mathbb{R}^n: \sum_{i=1}^n |x_i|\leq 1\}$ is the group of order $2^n n!$ consisting of signed permutations of the coordinates. Any centrally symmetric convex body $C$ induces a norm on $\mathbb{R}^n$ via $\norm{x}_C=\inf \{t>0: x\in t C\}$ for $x\in\mathbb{R}^n$.

By a kissing configuration of a convex body $C$, we mean a set $S\subset\mathbb{R}^n$ such that the translates $x+\frac{1}{2}C,\ x\in S$ have disjoint interiors and each $x+\frac{1}{2}C$ intersects $\frac{1}{2}C$ only at its boundary. Note that $S$ is a kissing configuration of $C$ if and only if it is a kissing configuration of the central symmetrization $\frac{1}{2}(C-C)\in\mathcal{K}_n^0$, so without loss of generality we assume henceforth that $C\in\mathcal{K}_n^0$. Under this assumption, a kissing configuration is equivalently a set $S\subseteq \mathbb{R}^n$ such that $\norm{x}_C=1$ for all $x\in S$ and $\norm{x-x'}_C\geq 1$ for all $x,x'\in S$, $x\neq x'$.

The maximum size of a kissing configuration is denoted $\kappa_T(C)$ and sometimes called the Hadwiger number of $C$. The maximum size of a kissing configuration of $C$ consisting of the shortest non-zero vectors (with respect to the norm $\norm{\cdot}_C$) of a lattice is denoted $\kappa_L(C)$. Recall that a lattice $\Lambda$ is the $\mathbb{Z}$-span of a basis of $\mathbb{R}^n$. Clearly, $\kappa_L(C)\leq \kappa_T(C)$. The condition that the shortest non-zero vectors of a lattice $\Lambda$ form a kissing configuration is equivalent to the condition that $\min_{x\in\Lambda\setminus\{0\}}\norm{x}_C=1$, and the condition that the kissing configuration is as large as possible is equivalent to the set $\mathcal{M}_C(\Lambda)=\{x\in\Lambda:\norm{x}_C=1\}$ having maximal cardinality.

We call two lattices $\Lambda_1$ and $\Lambda_2$ $C$-equivalent, if $\Lambda_1=\sigma(\Lambda_2)$ for some $\sigma\in\mathcal{G}_C$. Note that $|\mathcal{M}_C(\Lambda)|$ depends only on the $C$-equivalence class of $\Lambda$. Define the support of a vector $x=(x_1,\dots,x_n)\in\R^n$ to be the set $\text{supp}(x)=\{i:x_i\neq 0\}$. We say that $i$ is a max-coordinate of $x$ if $|x_i|=\norm{x}_\infty$. We define the set $D_n^{+}\subset \mathbb{R}^n$ to be the union of $D_n$ and its translate $D_n+(\frac{1}{2},\dots,\frac{1}{2})$, where $D_n=\{x\in\mathbb{Z}^n: \sum_{i=1}^n x_i\equiv0\pmod{2}\}$. It is easily seen that $D_n^{+}$ is a lattice if and only if $n$ is even. For instance, $D_8^+$ is isometric to the $E_8$ lattice, while $D_4^+$ is a scaled Hadamard rotation of the integer lattice $\mathbb{Z}^4$. We define the ($\ell^1$-norm) covering radius of a discrete set $S\subset\mathbb{R}^n$ to be the quantity $\rho(S)=\sup_{y\in\mathbb{R}^n}\inf_{x\in S}\norm{x-y}_1$. We call the points $y\in \mathbb{R}^n$ achieving the supremum deep holes of $S$.

\section{The lattice kissing number of $K_n$}
\label{sec:lattice_kissing}

In this section, we focus on the lattice kissing number problem for $K_n$, and prove the following theorem.

\begin{thm}
    \label{thm:cross-polytope-4}
    $\kappa_L(K_4)=40$, achieved precisely by lattices $K_4$-equivalent to $D_4^+$.
\end{thm}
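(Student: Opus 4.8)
It suffices to treat the upper bound and the uniqueness statement, since a direct count shows that $D_4^+$ already realises $40$ minimal vectors for $\norm{\cdot}_1$: its nonzero vectors of $\ell^1$-length $2$ are the $8$ vectors $\pm 2e_i$, the $24$ vectors $\pm e_i\pm e_j$, and the $8$ vectors $\tfrac12(\pm1,\pm1,\pm1,\pm1)$ with an even number of minus signs. So let $\Lambda\subset\R^4$ be a lattice with $\lambda_1:=\min_{x\in\Lambda\setminus\{0\}}\norm{x}_1=2$, put $M=\{x\in\Lambda:\norm{x}_1=2\}$, and assume $|M|\ge 40$; I must deduce $|M|=40$ and that $\Lambda$ is a signed-permutation image of $D_4^+$. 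If the vectors of $M$ do not span $\R^4$ they lie in a subspace $V$ of dimension at most $3$, and since they are exactly the minimal vectors of the lattice $\Lambda\cap V$ for the restricted norm, Hadwiger's bound applied in $V$ gives $|M|\le 3^3-1=26$, a contradiction; so $M$ spans $\R^4$.

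The key tool is an $\ell^1$ version of Minkowski's coset argument. If $v,w\in M$ satisfy $v\equiv w\pmod{2\Lambda}$ and $v\ne\pm w$, then $\tfrac12(v\pm w)$ are nonzero lattice vectors, so $\norm{v\pm w}_1\ge 4$; together with $\norm{v\pm w}_1\le\norm{v}_1+\norm{w}_1=4$ this forces equality in the $\ell^1$ triangle inequality with both signs, which happens only when $v_iw_i=0$ for every $i$. Hence distinct $\pm$-pairs of minimal vectors lying in one coset of $2\Lambda$ have pairwise disjoint supports, so each nonzero coset of $\Lambda/2\Lambda\cong\mathbb{F}_2^4$ meets $M$ in at most $8$ vectors, and a coset meeting $M$ in $8$ vectors must equal $\{\pm 2e_1,\dots,\pm 2e_4\}$ (four pairs with singleton support), which forces $2\Z^4\subseteq\Lambda$. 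Writing $|M|=\sum_{\bar c\ne 0}m(\bar c)$ over the $15$ nonzero cosets with $m(\bar c)=|M\cap\bar c|$, one likewise classifies the cosets with $m(\bar c)\in\{4,6\}$ by the admissible lists of disjoint supports in $\{1,2,3,4\}$.

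The argument then splits. Suppose first $2\Z^4\subseteq\Lambda$ (equivalently, the $8$-element coset occurs, possibly after a signed permutation). Then the problem is finite: $\Lambda$ corresponds to a subgroup $G\le(\R/2\Z)^4$, the condition $\lambda_1=2$ reads ``$\sum_i\operatorname{dist}(g_i,2\Z)\ge 2$ for every nonzero $g\in G$'', and $|M|=8+\sum_g 2^{\#\{i:\,g_i\equiv 1\ (\mathrm{mod}\ 2)\}}$, the sum over nonzero $g\in G$ with $\sum_i\operatorname{dist}(g_i,2\Z)=2$. One checks that $G$ can contain only $0,1,2,3$ or all $6$ of the weight-two classes $[e_i+e_j]$; if it contains all $6$ then $G$ contains the image of $D_4$, the displayed inequality forces any further generator to be balanced across the three pairings of $\{1,2,3,4\}$ and hence congruent to $\tfrac12(\pm1,\pm1,\pm1,\pm1)$, and a short check rules out $|G|>16$. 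This pins down $D_4^+$ (up to the signed permutation exchanging its two half-integer glue classes) and gives $|M|=40$; in the remaining sub-cases, with at most three weight-two classes and hence little room in the other cosets, one obtains $|M|<40$. If instead $2\Z^4\not\subseteq\Lambda$, every coset satisfies $m(\bar c)\le 6$, and one bounds $\sum_{\bar c}m(\bar c)$ strictly below $40$ by combining the per-coset support restrictions with the cross-coset inequalities $\norm{v\pm w}_1\ge 2$ and with the inequality $\sum_i\min(|v_i|,|w_i|)\le 1$ valid for minimal $v,w$ in a common closed orthant (which follows from $v-w\in\Lambda\setminus\{0\}$ having $\ell^1$-length at least $2$).

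The main obstacle is this last case: without the rigid coset $\{\pm 2e_i\}$ the lattice is far less constrained, and excluding $|M|\ge 40$ appears to require a genuinely multi-way analysis organised by the profile $(t_1,t_2,t_3,t_4)$ of support sizes of the $\pm$-pairs in $M$ and by the overlap pattern of supports attached to different cosets. A secondary difficulty is carrying the equality discussion through the finite case with enough care to obtain the uniqueness statement rather than merely the bound $\kappa_L(K_4)\le 40$.
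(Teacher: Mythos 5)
Your opening moves coincide with the paper's: the count of the $40$ minimal vectors of $D_4^+$ is correct, and your key tool --- that two minimal vectors $v\not\equiv\pm w$ with $v\equiv w\pmod{2\Lambda}$ must have disjoint supports, because $\tfrac12(v\pm w)$ forces equality in the $\ell^1$ triangle inequality for both signs --- is exactly the paper's starting lemma. But the proposal has a genuine gap, which you yourself flag: the case $2\Z^4\not\subseteq\Lambda$ is never proved, only described as requiring ``a genuinely multi-way analysis,'' and even the case $2\Z^4\subseteq\Lambda$ is not actually a finite check as stated, since the subgroup $G=\Lambda/2\Z^4$ of the torus $(\R/2\Z)^4$ ranges over an infinite family of finite subgroups; the steps ``one checks,'' ``a short check rules out'' and ``one obtains $|M|<40$'' are carrying real, unsupplied content. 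Since both the upper bound and the uniqueness hinge on these cases, the theorem is not established.

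The paper closes precisely these gaps with two devices absent from your sketch. First, a pigeonhole/max-coordinate argument: distributing at least $20$ sign-pairs over the $15$ nonzero cosets of $L/2L$ forces several cosets to contain two pairs with disjoint supports, and the observation that two distinct minimal vectors sharing a max-coordinate force one of them to have $\ell^\infty$-norm at most $\tfrac12$ then produces (after a signed permutation, in the normalization $\lambda_1=1$) the explicit vectors $(\tfrac12,\tfrac12,0,0)$, $(0,0,\tfrac12,\tfrac12)$, hence $(\tfrac14,\tfrac14,\tfrac14,\tfrac14)$, and enough further vectors to force $\Z^4\subseteq L$ --- so the analogue of your hard case simply never occurs. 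Second, a covering-radius computation: the lattice $\mathcal{H}_2\oplus\mathcal{H}_2$ has $\ell^1$ covering radius $1$ with an explicitly parametrized set of deep holes, so any lattice strictly containing the resulting sublattice $L'$ must contain a deep hole of one of four parametrized types; analyzing each type pins $L$ down to one of four explicit lattices, of which only $\tfrac12 D_4^+$ attains $40$ (the others give $36$ or $28$), yielding both the bound and the uniqueness. If you wish to complete your own route you need substitutes for both devices; the covering-radius lemma in particular is what converts ``infinitely many candidate overlattices'' into a list of four.
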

\begin{proof}
    Let $L\subset\R^4$ be a lattice and let $\mathcal{M}$ be a set containing exactly one of each $\pm$ pairs of vectors in $\mathcal{M}_{K_4}(L)$. Suppose that $|\mathcal{M}|\geq 20$. Note that if $x,y\in\mathcal{M}$ are distinct vectors such that $x\equiv y\pmod{2L}$, then $\frac{x\pm y}{2}\in \mathcal{M}_{K_4}(L)$ and the triangle inequality implies that $\text{supp}(x)\cap\text{supp}(y)=\emptyset$, and in particular $|\text{supp}(x)|\leq 2$ or $|\text{supp}(y)|\leq 2$.
    
    Since there are $2^4-1=15$ non-zero classes in $L/2L$, the following is true: $\mathcal{M}$ contains $k\in\{0,1,2,3,4\}$ vectors whose support has size one and $5-k$ pairs $\{x_1^i,x_2^i\}_{i=1}^{5-k}$ of vectors such that $x_1^i\equiv x_2^i\pmod{2L}$ and $|\text{supp}(x_1^i)|=|\text{supp}(x_2^i)|=2$ for all $i=1,\dots,5-k$. If $x,y\in\mathcal{M}$ are distinct vectors sharing a max-coordinate, then $\norm{x}_\infty\leq \frac{1}{2}$ or $\norm{y}_\infty \leq\frac{1}{2}$. In particular, if $|\text{supp}(x)|\geq 2$, then $|x_i|\leq \frac{1}{2}$ for all $i\in \cup_{x\in \mathcal{M}:|\text{supp}(x)|=1}\text{supp}(x)$. Then by the pigeonhole principle, among the $2(5-k)$ vectors $\{x_1^i,x_2^i\}_{i=1}^{5-k}$, there are at most $4-k$ with max-norm $>\frac{1}{2}$. In particular, there exists a pair $\{x_1^{i_1},x_2^{i_1}\}$ satisfying $\norm{x_1^{i_1}}_\infty=\norm{x_2^{i_1}}_\infty=\frac{1}{2}$, and moreover, $\mathcal{M}$ contains in addition to the pair $\{x_1^{i_1},x_2^{i_1}\}$ at least $4-k$ vectors $\{y_i\}_{i=1}^{4-k}$ with $\norm{y_i}_\infty=\frac{1}{2}$ and $|\text{supp}(y_i)|=2$ for all $i=1,\dots,4-k$. Recall that $\text{supp}(x_1^{i_1})\cap\text{supp}(x_2^{i_1})=\emptyset$. Thus, replacing $L$ by a $K_4$-equivalent lattice, we may suppose that $x_1^{i_1}=(\frac{1}{2},\frac{1}{2},0,0)$ and $x_2^{i_1}=(0,0,\frac{1}{2},\frac{1}{2})$. The vectors $x_1^{i_1}$, $x_2^{i_1}$, $\frac{x_1^{i_1}+x_2^{i_1}}{2}=(\frac{1}{4},\frac{1}{4},\frac{1}{4},\frac{1}{4})$ together with the vectors $\{y_i\}_{i=1}^{4-k}$ and $\{x\in\mathcal{M}:|\text{supp}(x)|=1\}$ span a lattice which contains the lattice $L'$ generated by $\mathbb{Z}^4$, $(\frac{1}{2},\frac{1}{2},0,0)$, $(0,0,\frac{1}{2},\frac{1}{2})$ and $(\frac{1}{4},\frac{1}{4},\frac{1}{4},\frac{1}{4})$.

    Since $|\mathcal{M}_{K_4}(L')|=20$, the containment $L'\subsetneq L$ is strict. Since $\rho(L')\leq 1$ by the next lemma, $L$ contains a deep hole of $L'$, which is also a deep hole of the lattice $\mathcal{H}_2\oplus\mathcal{H}_2$ generated by $\Z^4$, $(\frac{1}{2},\frac{1}{2},0,0)$ and $(0,0,\frac{1}{2},\frac{1}{2})$. Thus, by the next lemma, $L\setminus L'$ contains a vector of one of the following types.
    \begin{align*}
        v_1&=(x_1,\frac{1}{2}-x_1,x_2,\frac{1}{2}-x_2),\quad 0\leq x_1,x_2\leq \frac{1}{2}, \\
        v_2&=(x_1,x_1-\frac{1}{2},x_2,\frac{1}{2}-x_2),\quad 0\leq x_1,x_2\leq \frac{1}{2},\\
        v_2'&=(x_1,\frac{1}{2}-x_1,x_2,x_2-\frac{1}{2}),\quad 0\leq x_1,x_2\leq \frac{1}{2},\\
        v_3&=(x_1,x_1-\frac{1}{2},x_2,x_2-\frac{1}{2}),\quad 0\leq x_1,x_2\leq \frac{1}{2}.
    \end{align*}
Suppose $v_1\in L$. Then 
\begin{align*}
    \norm{(x_1,\frac{1}{2}-x_1,x_2,\frac{1}{2}-x_2)-(\frac{1}{4},\frac{1}{4},\frac{1}{4},\frac{1}{4})}_1\leq 1
\end{align*}
for all $0\leq x_1,x_2\leq \frac{1}{2}$, with equality if and only if $x_1,x_2\in\{0,\frac{1}{2}\}$. Hence, $L$ contains $(\frac{1}{2},0,\frac{1}{2},0)$. The lattice $\frac{1}{2}D_4^+$ is generated by $L'$ and $(\frac{1}{2},0,\frac{1}{2},0)$. We conclude that $\frac{1}{2}D_4^+\subseteq L$.

Suppose $v_2\in L$. Then 
\begin{align*}
    \norm{(x_1,x_1-\frac{1}{2},x_2,\frac{1}{2}-x_2)-(\frac{1}{4},\frac{1}{4},\frac{1}{4},\frac{1}{4})}_{1}\leq 1
\end{align*}
for all $\frac{1}{4}\leq x_1\leq\frac{1}{2}$, $0\leq x_2\leq \frac{1}{2}$, with equality if and only if $x_2\in\{0,\frac{1}{2}\}$. Similarly,
\begin{align*}
   \norm{(x_1,x_1-\frac{1}{2},x_2,\frac{1}{2}-x_2)-(\frac{1}{4},-\frac{3}{4},\frac{1}{4},\frac{1}{4})}_{1}\leq 1
\end{align*}
for all $0\leq x_1\leq\frac{1}{4}$, $0\leq x_2\leq \frac{1}{2}$, with equality if and only if $x_2\in\{0,\frac{1}{2}\}$. Thus, $v_2=(x_1,x_1-\frac{1}{2},0,\frac{1}{2})$
 or $v_2=(x_1,x_1-\frac{1}{2},\frac{1}{2},0)$. This implies that $L$ contains $(2x_1-\frac{1}{2},2x_1-\frac{1}{2},0,0)$. Since $-\frac{1}{2}\leq 2x_1-\frac{1}{2}\leq \frac{1}{2}$, we must have $x_1\in\{0,\frac{1}{4},\frac{1}{2}\}$. Thus, $L$ contains $(\frac{1}{2},0,\frac{1}{2},0)$ or $(\frac{1}{4},-\frac{1}{4},0,\frac{1}{2})$, so it contains $\frac{1}{2}D_4^+$ or the lattice $L_0$ spanned by $L'$ and $(\frac{1}{4},-\frac{1}{4},0,\frac{1}{2})$. The assumption $v_2'\in L$ leads to the same conclusion.

 Now suppose $v_3\in L$. Then $2v_3+(0,1,0,1)=(2x_1,2x_1,2x_2,2x_2)\in L$. It is easy to see that this implies $(2x_1,2x_1,2x_2,2x_2)\in L'$. Then the only possibilities are $x_1,x_2\in\{0,\frac{1}{4},\frac{1}{2}\}$ or $x_1,x_2\in\{\frac{1}{8},\frac{3}{8}\}$. In the former case, $L$ contains $\frac{1}{2}D_4^+$ or $L_0$. In the latter case, $L$ contains the lattice $L_1$ spanned by $L'$ and $(\frac{1}{8},-\frac{3}{8},\frac{1}{8},-\frac{3}{8})$ or the lattice $L_1'$ spanned by $L'$ and $(\frac{1}{8},-\frac{3}{8},\frac{3}{8},-\frac{1}{8})$. Note that $L_1$ is $K_4$-equivalent to $L_1'$.

In conclusion, $L$ contains $\frac{1}{2}D_4^+$, $L_0$, $L_1$ or $L_1'$. If the containment is strict, then $L$ contains an additional deep hole as above, which implies that $L$ contains two of these lattices. This however implies that $\min_{x\in L\setminus\{0\}}\norm{x}_1<1$, a contradiction. In conclusion, $L\in\{\frac{1}{2}D_4^+,L_0,L_1,L_1'\}$. We check that
\begin{align*}
    |\mathcal{M}_{K_4}(\frac{1}{2}D_4^+)|=40,\quad
    |\mathcal{M}_{K_4}(L_0)|=36,\quad
    |\mathcal{M}_{K_4}(L_1)|=|\mathcal{M}_{K_4}(L_1')|=28.
\end{align*}
We are left with only one possibility: $L=\frac{1}{2}D_4^+$.

\end{proof}

\begin{lem}
\label{lem:h2_covering}
Let $\mathcal{H}_2$ be the planar lattice generated by $(1,0)$ and $(\frac{1}{2},\frac{1}{2})$. Then the set of deep holes of the lattice $\mathcal{H}_2\oplus \mathcal{H}_2\subset \mathbb{R}^4$ is the set $$\left\{(x_1,\pm(\frac{1}{2}-x_1),x_2,\pm(\frac{1}{2}-x_2)):0\leq x_1,x_2\leq \frac{1}{2}\right\}+\mathcal{H}_2\oplus\mathcal{H}_2.$$
In particular, $\rho(\mathcal{H}_2\oplus\mathcal{H}_2)=1$.
\end{lem}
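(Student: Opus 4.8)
The goal is to prove Lemma~\ref{lem:h2_covering}: identify the deep holes of $\mathcal{H}_2\oplus\mathcal{H}_2$ in $\mathbb{R}^4$ under the $\ell^1$-norm, and conclude $\rho(\mathcal{H}_2\oplus\mathcal{H}_2)=1$.

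My plan is to first reduce to the planar case. Since the $\ell^1$-norm is additive across the two orthogonal coordinate blocks, for $y=(y_1,y_2,y_3,y_4)$ with $y'=(y_1,y_2)$ and $y''=(y_3,y_4)$ we have $\inf_{x\in\mathcal{H}_2\oplus\mathcal{H}_2}\norm{x-y}_1 = \inf_{u\in\mathcal{H}_2}\norm{u-y'}_1 + \inf_{v\in\mathcal{H}_2}\norm{v-y''}_1$. Hence $\rho(\mathcal{H}_2\oplus\mathcal{H}_2)=2\rho(\mathcal{H}_2)$, and the deep holes of the direct sum are exactly pairs $(y',y'')$ where each of $y',y''$ is a deep hole of the planar lattice $\mathcal{H}_2$. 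So the whole lemma follows once I show $\rho(\mathcal{H}_2)=\tfrac12$ and that the deep holes of $\mathcal{H}_2$ form the set $\{(x,\pm(\tfrac12-x)):0\le x\le \tfrac12\}+\mathcal{H}_2$.

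For the planar computation I would work with a fundamental domain of $\mathcal{H}_2$. The lattice $\mathcal{H}_2$ generated by $(1,0)$ and $(\tfrac12,\tfrac12)$ has covolume $\tfrac12$; a convenient fundamental domain is the square $Q=\{(a,b): 0\le a<1,\ 0\le b<\tfrac12\}$ (since $(\tfrac12,\tfrac12)\equiv(-\tfrac12,\tfrac12)$ shows the translates of $Q$ tile the plane — more carefully, $\mathbb{Z}(1,0)+\mathbb{Z}(0,1)$ has index $2$ in $\mathcal{H}_2$, with coset representative $(\tfrac12,\tfrac12)$). For a point $y$ in this region the four nearest lattice points to consider are $(0,0)$, $(1,0)$, $(\tfrac12,\tfrac12)$ and $(-\tfrac12,\tfrac12)$, i.e. the corners of the relevant cells; I would argue that the $\ell^1$-distance from $y$ to $\mathcal{H}_2$ equals $\min$ over these candidates and then maximize this min over the fundamental domain. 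The maximization is a routine piecewise-linear optimization: the function $y\mapsto\min_i\norm{y-\lambda_i}_1$ is concave-like on each linear piece, and one checks that the maximal value $\tfrac12$ is attained exactly on the segment joining $(0,0)$-type and $(\tfrac12,\tfrac12)$-type Voronoi-equidistant locus, which after translating by the lattice is precisely $\{(x,\tfrac12-x):0\le x\le\tfrac12\}$ together with its mirror image $\{(x,x-\tfrac12):0\le x\le\tfrac12\}$ (the two families $v_1$-block and the $v_2,v_2',v_3$-blocks in the theorem's proof). I would verify the claimed set is correct by checking (i) every listed point has $\ell^1$-distance exactly $\tfrac12$ from the lattice, and (ii) no point has distance exceeding $\tfrac12$, using the tiling of the plane by $\ell^1$-balls/diamonds of radius $\tfrac12$ centered at lattice points — indeed diamonds of radius $\tfrac12$ centered at $\mathcal{H}_2$ cover $\mathbb{R}^2$ (their total area $2\cdot(\tfrac12)^2\cdot 2 = 1$ per fundamental cell of area... one checks the covering is exact with the stated overlap set), which simultaneously gives $\rho(\mathcal{H}_2)\le\tfrac12$ and pins down where equality holds.

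The main obstacle is the case analysis in the planar maximization: one must be careful that the nearest lattice point can genuinely be taken among the four corners of the fundamental cell (no farther lattice point is ever strictly closer in $\ell^1$), and that the locus of deep holes is exactly the stated union of two segments modulo $\mathcal{H}_2$ and not something larger. This is most cleanly handled by the covering-by-diamonds argument: show $\bigcup_{\lambda\in\mathcal{H}_2}(\lambda+\tfrac12 K_2)=\mathbb{R}^2$ with $K_2$ the $2$-dimensional cross-polytope, which immediately yields $\rho\le\tfrac12$, and then observe that a point is a deep hole iff it lies on the boundary of every diamond containing it and in no diamond's interior — a short computation identifies exactly the two diagonal segments. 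Combining with the reduction to the product gives the $4$-dimensional statement, in particular $\rho(\mathcal{H}_2\oplus\mathcal{H}_2)=2\cdot\tfrac12=1$.
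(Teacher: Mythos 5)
Your proposal is correct, and the first half --- reducing to the planar lattice via additivity of the $\ell^1$-norm across the two coordinate blocks, so that deep holes of the direct sum are exactly pairs of planar deep holes and $\rho(\mathcal{H}_2\oplus\mathcal{H}_2)=2\rho(\mathcal{H}_2)$ --- is precisely the paper's first step. Where you diverge is the planar computation. The paper avoids any case analysis by observing that $\mathcal{H}_2$ equipped with $\norm{\cdot}_1$ is a scaled rotation of $\mathbb{Z}^2$ equipped with $\norm{\cdot}_\infty$ (the map $(u,v)\mapsto\frac{1}{2}(u+v,\,v-u)$ sends $\mathbb{Z}^2$ onto $\mathcal{H}_2$ and satisfies $\norm{T x}_1=\norm{x}_\infty$), reads off the $\ell^\infty$ deep holes of $\mathbb{Z}^2$ as the lines through $(\frac{1}{2},\frac{1}{2})+\mathbb{Z}^2$, and rotates back; this is a two-line argument with no optimization. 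Your route --- tiling $\mathbb{R}^2$ by $\ell^1$-diamonds of radius $\frac{1}{2}$ centered at $\mathcal{H}_2$ and identifying the deep holes as the skeleton of the tiling --- is more elementary and also works: each diamond has area $2\cdot(\frac{1}{2})^2=\frac{1}{2}=\det\mathcal{H}_2$, so the closed diamonds tile exactly, giving $\rho(\mathcal{H}_2)\le\frac{1}{2}$, and the points lying in no open diamond are exactly the diamond edges, which are the two stated segments modulo $\mathcal{H}_2$. Do tidy up the exact-tiling sentence (it trails off in your draft) and, if you keep the four-nearest-points variant instead, justify that no farther lattice point is relevant on the chosen fundamental cell; the diamond-tiling version is cleaner and makes that issue moot. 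Either way the conclusion $\rho(\mathcal{H}_2)=\frac{1}{2}$, hence $\rho(\mathcal{H}_2\oplus\mathcal{H}_2)=1$, follows as claimed.
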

\begin{proof}
The deep holes of a direct sum $L_1\oplus L_2$ of lattices $L_1$, $L_2$ are given by $(x,y)$, where $x$ is a deep hole of $L_1$ and $y$ is a deep hole of $L_2$. Thus, it suffices to determine the deep holes of $\mathcal{H}_2$. Note that $\mathcal{H}_2$ is a scaled rotation of the lattice $\mathbb{Z}^2$, and $K_2$ is a scaled rotation, with the same factor, of the cube $Q_2=\{x\in\mathbb{R}^2:\norm{x}_\infty\leq 1\}$. The deep holes of $\mathbb{Z}^2$ with respect to the norm $\norm{\cdot}_\infty$ are the sets of vertical and horizontal lines passing through $(\frac{1}{2},\frac{1}{2})+\mathbb{Z}^2$. Now rotating back to the original situation, we see that the deep holes of $\mathcal{H}_2$ consist of the translates by vectors in $\mathcal{H}_2$ of the line segments $\{(x,\pm(\frac{1}{2}-x)):0\leq x\leq \frac{1}{2}\}$. 
\end{proof}

In a recent article \cite[Theorem 3.5]{miller2019kissing}, the authors prove that $\kappa_L(B_n^p)=O(\frac{n}{p}e^{\frac{n}{p}})$ for all $0<p\leq 2$, where $B_n^p=\{x\in\mathbb{R}^n:\sum_{i=1}^n |x_i|^p\leq 1\}$ is the unit $\ell^p$-ball. The next theorem improves this to $O(2^n)$ for the $\ell^1$-ball, which is similar to the upper bound $\kappa_L(C_s)\leq 2(2^{n}-1)$ which holds for strictly convex $C_s\in\mathcal{K}_n^0$.

\begin{thm}
\label{thm:lattice_kissing_bound}
    $\kappa_L(K_n)< 12(2^n-1)$ for all $n\geq 1$.
\end{thm}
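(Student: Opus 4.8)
The plan is to adapt the classical Minkowski-style counting argument (the one giving $\kappa_L(C) \le 2(2^n-1)$ for strictly convex bodies) to the non-strictly-convex body $K_n$, absorbing the failure of strict convexity into the constant factor $12$ instead of $2$. Let $\Lambda$ be a lattice whose minimal $\ell^1$-norm is $1$ and let $\mathcal{M} = \mathcal{M}_{K_n}(\Lambda)$ be its set of minimal vectors; pick a set $\mathcal{M}^+$ of representatives of the $\pm$-pairs, so $|\mathcal{M}| = 2|\mathcal{M}^+|$. The core observation, exactly as in the proof of Theorem \ref{thm:cross-polytope-4}, is that if $x, y \in \mathcal{M}$ are distinct and $x \equiv y \pmod{2\Lambda}$, then $\frac{x \pm y}{2} \in \Lambda$, and minimality forces $\norm{\frac{x+y}{2}}_1 \ge 1$ and $\norm{\frac{x-y}{2}}_1 \ge 1$; since $\norm{\frac{x+y}{2}}_1 + \norm{\frac{x-y}{2}}_1 \le \frac{1}{2}(\norm{x}_1 + \norm{y}_1 + \norm{x}_1 + \norm{y}_1) = 2$ wait — more carefully, $\norm{x+y}_1 + \norm{x-y}_1 \le 2\max(\ldots)$; the right statement is that both halves lie in $\mathcal{M}$ and their supports are disjoint. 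So within a single nonzero coset of $\Lambda/2\Lambda$, any two minimal vectors have disjoint support.

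The key step is therefore to bound, for each of the $2^n - 1$ nonzero cosets $c \in \Lambda/2\Lambda$, the number $N(c)$ of vectors of $\mathcal{M}^+$ lying in $c$. If all vectors in $c$ had support of size $\ge 3$ pairwise disjoint in $\{1,\dots,n\}$, we would get at most $\lfloor n/3 \rfloor$ of them; but supports of size $1$ or $2$ are the obstruction, since many sign/coordinate choices on small supports can coexist. I would argue: vectors in $\mathcal{M}^+ \cap c$ with $|\text{supp}| \le 2$ — because their supports within one coset must still be pairwise disjoint — number at most roughly $n$ (at most $\lfloor n/1\rfloor$ from singletons within a coset, but a coset can contain at most one singleton up to sign, etc.), and combining the disjoint-support constraint across size-$1$ and size-$2$ vectors in a fixed coset gives $N(c) \le $ some small constant, not growing with $n$. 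Summing, $|\mathcal{M}^+| \le \text{const} \cdot (2^n - 1)$, hence $\kappa_L(K_n) = |\mathcal{M}| \le 2\,\text{const}\,(2^n-1)$; one then checks the bookkeeping yields the constant $12$, i.e. $N(c) \le 6$ for every nonzero coset (for instance: at most one $\pm$-pair of singletons, and then the remaining size-$\ge 2$ vectors have support avoiding that coordinate and pairwise disjoint, with a separate max-coordinate argument à la Theorem \ref{thm:cross-polytope-4} capping how many can have $\norm{\cdot}_\infty = \frac12$ versus forcing smaller entries).

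I expect the main obstacle to be pinning down the uniform constant bound $N(c) \le 6$ (or whatever it turns out to be) on the number of $\pm$-pairs of minimal vectors inside one nonzero coset of $\Lambda/2\Lambda$: the disjoint-support condition alone bounds the count by $O(n)$, which is far too weak, so one must additionally exploit that two minimal vectors sharing a max-coordinate cannot both have $\ell^\infty$-norm $> \frac12$ (the observation used in Theorem \ref{thm:cross-polytope-4}), together with the fact that the half-sum/half-difference of two same-coset minimal vectors is itself a \emph{minimal} vector of support equal to the union — so a coset containing two size-$2$ vectors $x \equiv y$ already "uses up" a $4$-element support and the derived vectors constrain further candidates in the same coset. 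Carefully iterating this self-improvement to get a constant independent of $n$ is the crux; the rest (summation over cosets, the factor of $2$ for $\pm$-pairs, and verifying $12$ beats $2(2^n-1)$ only in the constant) is routine.
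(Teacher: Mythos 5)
Your first paragraph correctly recovers the paper's starting point (the disjoint-support property of two distinct minimal vectors lying in the same nonzero coset of $\Lambda/2\Lambda$), but the step you yourself identify as the crux --- a uniform constant bound $N(c)\le 6$ on the number of $\pm$-pairs of minimal vectors in a single nonzero coset --- is false, so this route cannot be completed. Consider $L=\{x\in\tfrac12\mathbb{Z}^n:\sum_i x_i\in\mathbb{Z}\}$. Its minimal $\ell^1$-norm is $1$ (a vector of $L$ with one nonzero coordinate has that coordinate in $\mathbb{Z}$, and a vector with at least two nonzero coordinates has each of them of absolute value at least $\tfrac12$), and $2L=D_n$, so $e_1\equiv e_2\equiv\cdots\equiv e_n\pmod{2L}$. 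Hence the single nonzero coset $e_1+2L$ contains the $n$ pairwise non-proportional minimal vectors $e_1,\dots,e_n$: the $O(n)$ bound coming from disjointness of supports is actually attained by one coset, and the max-coordinate observation from Theorem~\ref{thm:cross-polytope-4} gives nothing here because these vectors share no coordinates at all. In particular your guess that a coset contains at most one singleton up to sign is wrong, and summing a constant per-coset bound over the $2^n-1$ cosets cannot work.

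The paper uses the coset structure differently, and this is the idea your proposal is missing. Disjointness of supports within a coset implies only that each nonzero coset contains at most $5$ representatives whose support has size greater than $\lfloor n/6\rfloor$ (six pairwise disjoint supports each of size exceeding $n/6$ cannot fit inside $\{1,\dots,n\}$). So if $|\mathcal{M}|\ge 6(2^n-1)$, then at least $2^n-1$ members of $\mathcal{M}$ have support of size at most $\lfloor n/6\rfloor$. These small-support vectors are then counted \emph{globally}, not per coset: the minimal vectors supported inside a fixed $k$-element coordinate set form a kissing configuration of $K_k$, so by Hadwiger's bound there are at most $\binom{n}{k}3^k$ minimal vectors of support size $k$ in total, and $\sum_{k\le n/6}\binom{n}{k}3^k=2^{(H(1/6)+\frac16\log_2 3)(1+o(1))n}\approx 2^{0.915n}=o(2^n)$, a contradiction. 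The combination of ``few large-support vectors per coset'' with ``few small-support vectors overall'' is what replaces the (false) constant per-coset bound, and it is the ingredient your plan would need to be repaired with.
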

\begin{proof}
    Let $L\subset\mathbb{R}^n$ be a lattice and $\mathcal{M}$ a set of representatives of $\pm$ pairs of vectors in $\mathcal{M}_{K_n}(L)$. Suppose that $|\mathcal{M}|\geq 6(2^n-1)$. As in the proof of Theorem \ref{thm:cross-polytope-4}, this implies that $\mathcal{M}$ contains $\ell\geq 2^n-1$ vectors whose support has size at most $\lfloor\frac{n}{6}\rfloor$. But the number of vectors whose support has size $k\leq n$ in any kissing configuration in $\mathbb{R}^n$ is at most $\binom{n}{k}3^k$ by Hadwiger's bound. Thus, $\ell\leq \sum_{k=1}^{\lfloor\frac{n}{6}\rfloor}\binom{n}{k}3^k\leq \lfloor \frac{n}{6}\rfloor\binom{n}{\lfloor\frac{n}{6}\rfloor} 3^{\lfloor\frac{n}{6}\rfloor}$, which contradicts $\ell\geq 2^n-1$ by routine estimation.
\end{proof}

\section{Asymptotic bounds for $\kappa_T(K_n)$}
\label{sec:asymptotics}

\subsection{An asymptotic upper bound for $\kappa_T(K_n)$}

In this section, we turn our attention from the lattice kissing number problem to the translative kissing number problem, which is known to be a more challenging problem. The idea behind Theorem \ref{thm:main0} is that a maximum sized ball inscribed in a cross-polytope need only be enlargened by a factor of approximately $r=\frac{2e}{\sqrt{2\pi e}}\approx 1.315$ (for large $n$) in order for it to have a volume equal to that of the cross-polytope; or more precisely, $\text{vol}(K_n)\sim \frac{1}{\sqrt{2}} \text{vol}(\frac{r}{\sqrt{n}} B_n)$ as $n\to\infty$. Recall that $\text{vol}(K_n)=\frac{2^n}{n!}$.

Thus, if the inscribed balls are enlargened by a factor of $\sqrt{2}$, and if most of the balls are contained in a cross-polytope of relatively small radius, then a volume comparison should give a better bound than $3^n-1$. Here we will use the fact that it is relatively simple to bound the volume of a sphere outside of a cross-polytope by considering certain spherical caps. Since the enlargened balls need not be disjoint, we attach a density to each ball such that the density at any given point of space does not exceed one, as in Blichfeldt's method.

In our estimations, the following limit will appear often: for a fixed real number $0\leq a\leq 1$, $\lim_{n\to\infty}\frac{1}{n}\log_2 \binom{n}{\lfloor an\rfloor}=H(a)$, where $H(s)=-s\log_2(s)-(1-s)\log_2(1-s)$ and $H(0)=0=H(1)$ is the binary entropy function defined for $0\leq s\leq 1$.

\begin{proof}[Proof of Theorem \ref{thm:main0}]
    Let $X=\{x_1,\dots,x_N\}$ be a cross-polytope kissing configuration, so that the interiors of $x_i+\frac{1}{2}K_n$ and $x_j+\frac{1}{2}K_n$ are disjoint for all $i\neq j$. For $x\in X$, define $I_x=\{i: |x_i|\geq \frac{b}{n}\}\subseteq \{1,\dots,n\}$ and let $X'=\{x\in X: |I_x|\geq \lfloor cn\rfloor \}$. We select parameter values $b=0.334$ and $c=0.296$. For each $x\in X\setminus X'$, assign a set $S_x\subseteq \{1,\dots,n\}\setminus I_x$ such that $|S_x|=n-\lfloor cn\rfloor$. Then $\sum_{i\in S_x} |x_i|\leq b(1-c)+\frac{b}{n}=a$. Thus, if $x,x'\in X\setminus X'$ are distinct points such that the sets $S_x$, $S_{x'}$ coincide, then $\sum_{i\not\in S_x} |x_i-x_i'|\geq 1-2a$, since otherwise $\sum_{i=1}^n |x_i-x_i'|\leq \sum_{i\not \in S_x} |x_i-x_i'|+\sum_{i\in S_x}(|x_i|+|x_i'|)<1-2a+2a=1$, a contradiction. Hence, the interiors of the $\lfloor cn\rfloor$-dimensional bodies $(x_i)_{i\not \in S_x}+\frac{1-2a}{2}K_{\lfloor cn \rfloor}$ and $(x_i')_{i\not\in S_{x'}}+\frac{1-2a}{2}K_{\lfloor cn \rfloor}$ are disjoint, and both are contained in $\frac{3-2a}{2}K_{\lfloor cn\rfloor}$. By a volume comparison, we obtain 
    \begin{align*}
        |X\setminus X'|&\leq \binom{n}{\lfloor cn \rfloor}\left(\frac{3-2a}{1-2a}\right)^{\lfloor cn \rfloor}\leq2^{H(c)(1+o(1))n}\left(\frac{3-2a}{1-2a}\right)^{\lfloor cn \rfloor}\\
        &\leq 2.9161^{(1+o(1))n}.
    \end{align*} 
    Next we will bound $M=|X'|$ using Blichfeldt's enlargement method. Note that $\frac{1}{\sqrt{n}}B_n\subset K_n$, so the interiors of $x_i+\frac{1}{2\sqrt{n}}B_n$ and $x_j+\frac{1}{2\sqrt
    n}B_n$ are disjoint for all $i\neq j$. Let $\delta_n(r)=1-2n r^2$ if $0\leq r\leq \frac{1}{\sqrt{2n}}$ and $\delta_n(r)=0$ else. The inequality of Blichfeldt \cite{Blichfeldt1929} (see also the proof of \cite[Theorem 29.2]{gruber2007convex}) yields that for every $y\in\mathbb{R}^n$ and every subset $\mathcal{X}\subseteq X'$,
    \begin{align*}
        2|\mathcal{X}|\sum_{x\in\mathcal{X}}\norm{y-x}_2^2 \geq \sum_{x,x'\in\mathcal{X}}\norm{x-x'}_2^2\geq \frac{|\mathcal{X}|(|\mathcal{X}|-1)}{n},
    \end{align*}
    which implies that 
    \begin{align*}
    \sum_{x\in X'} \delta_n(\norm{y-x}_2)=\sum_{x\in \mathcal{X}_y} \delta_n(\norm{y-x}_2)&\leq 1 \qquad\text{for all $y\in\mathbb{R}^n$,}
    \end{align*}
where $\mathcal{X}_y=\{x\in X':\norm{y-x}_2\leq \frac{1}{\sqrt{2n}}\}$. Let $R=1.5675$ and consider the following bound for the volume of $RK_n$:
\begin{align*}
    \text{vol}(RK_n) &\geq \int_{RK_n} \sum_{x\in X'}\delta_n(\norm{y-x}_2)dy\\
    &= \sum_{x\in X'}\int_{RK_n}\delta_n(\norm{y-x}_2)dy\\
    &=\sum_{x\in X'}\int_{(RK_n-x)\cap (\frac{1}{\sqrt{2n}}B_n)}\delta_n(\norm{y}_2)dy\\
    &\geq M\int_{\frac{1}{\sqrt{2n}}B_n} \delta_n(\norm{y}_2)dy-\sum_{x\in X'}\text{vol}(\frac{1}{\sqrt{2n}}B_n\setminus (RK_n-x))\\
    &=\frac{2M}{2+n}\text{vol}(\frac{1}{\sqrt{2n}}B_n)-\sum_{x\in X'}\text{vol}((x+\frac{1}{\sqrt{2n}}B_n)\setminus RK_n).
\end{align*}
Let $\{v_i\}_{i=1}^{2^n}$ be an enumeration of the vectors $(\pm 1,\dots,\pm 1)$, and define the half-spaces $\mathcal{H}_i=\{y\in\mathbb{R}^n: y\cdot v_i\geq R\}$, which bound the body $RK_n$. Let $x\in X'$ be fixed, and note that
\begin{align*}
\text{vol}((x+\frac{1}{\sqrt{2n}}B_n)\setminus RK_n) \leq \sum_{i=1}^{2^n}\text{vol}(\mathcal{H}_i \cap(x+\frac{1}{\sqrt
2n} B_n)).
\end{align*}
Note that $\mathcal{H}_i\cap(x+\frac{1}{\sqrt
2n} B_n)$, if non-empty, is a spherical cap of radius $r=\frac{1}{\sqrt{2n}}$ and height $h=r-r_0$, where $r_0=\frac{R-x\cdot v_i}{\sqrt{n}}$ is the distance from $x$ to the hyperplane bounding $\mathcal{H}_i$. Recall that $x$ has at least $\lfloor cn \rfloor$ coordinates, say coordinates $1,\dots, \lfloor cn\rfloor$, of absolute value $\geq \frac{b}{n}$. Thus, $x\cdot v_i\leq 1-\frac{2kb}{n}$, where $k$ is the number of $j=1,\dots,\lfloor cn\rfloor$ such that $x_j\cdot v_{ij}<0$. Consequently, $r_0\geq \frac{R-1+\frac{2kb}{n}}{\sqrt{n}}$. Approximating a spherical cap by an appropriate cylinder it is easy to see that $V_{\text{cap}}(r,h)\leq h(r^2-(r-h)^2)^{\frac{n-1}{2}} \text{vol}(B_{n-1})=V_{\text{cyl}}(r,h)$. Recall that $\frac{\text{vol}(B_n)}{\text{vol}(B_{n-1})}\sim \sqrt{\frac{2\pi}{n}}$. We obtain the upper bound
\begin{align*}
  \sum_{i=1}^{2^n}\text{vol}(\mathcal{H}_i \cap(x+\frac{1}{\sqrt
2n} B_n))&\leq \sum_{k=0}^{\lfloor \frac{(1+\frac{1}{\sqrt{2}}-R)n}{2b}\rfloor}\binom{\lfloor cn\rfloor}{k}2^{n-\lfloor cn\rfloor}V_{\text{cyl}}(r,r-\frac{R-1+\frac{2kb}{n}}{\sqrt{n}})\\
&\leq \alpha^{(1+o(1))n}\text{vol}(\frac{1}{\sqrt{2n}}B_n),
\end{align*}
where  $\alpha=\sup_{0\leq k' \leq \frac{1+\frac{1}{\sqrt{2}}-R}{2b}}2^{cH(\frac{k'}{c})+(1-c)}(1-2(R-1+2k'b)^2)^{\frac{1}{2}}<1$.

In conclusion,
\begin{align*}
    R^n \frac{2^n}{n!}=&\text{vol}(R K_n)\geq M\left(\frac{2}{2+n}\text{vol}(\frac{1}{\sqrt{2n}}B_n)-\alpha^{(1+o(1))n}\text{vol}(\frac{1}{\sqrt{2n}}B_n)\right)
\end{align*}
from which it follows that $M\leq 2.91616^{(1+o(1))n}$, by routine estimation.
\end{proof}

\subsection{An asymptotic lower bound for $\kappa_T(K_n)$}

In the other direction, Larman and Zong showed that $\kappa_T(K_n)\geq 1.1249^{(1-o(1))n}$ \cite{larman1999kissing}, which was consequently improved to $\kappa_T(K_n)\geq \kappa_T(K_n)\geq 1.1348^{(1-o(1))n}$ by Talata \cite{talata2000}. The latter result is also contained in \cite{xu2007note}, where the author also provides weaker constructive bounds based on algebraic geometry codes. The main result of this section is stated below.

\begin{thm}
    \label{thm:main}
    $\kappa_T(K_n)\geq1.1637^{(1-o(1))n}$ as $n\to\infty$.
\end{thm}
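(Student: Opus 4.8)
# Proof Proposal for Theorem \ref{thm:main}

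The plan is to construct a large kissing configuration of $K_n$ by taking the short vectors of a suitable lattice built from a good error-correcting code, mimicking the classical Construction~A but adapted to the $\ell^1$ norm. The key observation is that for lattices of the form $\Lambda_{\mathcal{C}} = \mathcal{C} + q\mathbb{Z}^n$ (where $\mathcal{C}$ is a $q$-ary code with minimum Hamming distance $d$ and all nonzero codewords having entries in $\{0,\pm 1\}$ up to scaling), the $\ell^1$-minimum is governed by a trade-off between the Hamming distance of the code (cheap $\ell^1$ cost per nonzero coordinate) and the modulus $q$ (expensive jumps), while the number of minimal vectors grows like the number of minimum-weight codewords times a multiplicity factor. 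Concretely, I would look for a code over $\mathbb{Z}_q$ with roughly $q^{Rn}$ codewords of relative minimum distance $\delta$, rescale so that the $\ell^1$-minimum of the associated lattice equals $1$, and count the resulting minimal vectors; optimizing $R$, $\delta$, $q$ over the Gilbert--Varshamov region should yield the base $1.1637$.

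First I would set up the construction precisely: fix $q$ and consider vectors in $\mathbb{Z}^n$ reduced modulo $q$, so a lattice vector of support size $w$ and all nonzero entries $\pm 1$ has $\ell^1$-norm $w$, while "wrapping around" forces an increase. The minimal vectors of $\Lambda_{\mathcal{C}}$ after the correct rescaling are then in bijection (up to sign and coordinate placement governed by the code structure) with minimum-weight codewords, and there are $\exp(\Theta(n))$ of them by a Gilbert--Varshamov-type existence argument — this is where I would invoke a random or greedy code over $\mathbb{Z}_q$ achieving rate $R$ with $H_q(\delta) \le 1 - R$. Second, I would carry out the entropy optimization: the base of the exponential is of the form $2^{R H(\delta/R \cdot \text{something}) + \dots}$ or more precisely $q^{R}$ against the constraint that the rescaled $\ell^1$-minimum is $1$, and I expect the optimum to be an interior point making all three constraints (distance, rate, modulus) active. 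Third, I would verify the $o(1)$ losses: the rescaling factor, the $\binom{n}{w}$ multiplicity, and the sub-exponential code-size corrections all wash out in the $(1-o(1))n$ exponent.

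The main obstacle, I expect, is pinning down the exact $\ell^1$-minimum of $\Lambda_{\mathcal{C}}$ — not just a lower bound from the code distance, but an upper bound ensuring the minimum is \emph{not larger} than claimed (so that we genuinely have minimal vectors, i.e. a kissing configuration rather than a sub-optimal packing), and simultaneously ruling out "unexpected" short vectors that arise from combinations using the $q\mathbb{Z}^n$ part in a way not visible at the code level (e.g. a vector with a single coordinate equal to $q$, contributing $\ell^1$-cost $q$, versus a spread-out codeword). This forces a careful choice of $q$ relative to $\delta n$: one needs $q$ large enough that wrapping is never advantageous but small enough that the rate $R = \log_q(\#\mathcal{C})/n$ stays bounded below. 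Balancing these — essentially solving $\min(\text{codeword cost}, q) = $ target while maximizing the codeword count — is the crux of getting the specific constant $1.1637$ rather than Talata's $1.1348$, and I would guess the improvement comes precisely from choosing the modulus $q$ more cleverly (or allowing entries in $\{0,\pm1,\pm 2,\dots\}$ with a weighted count) than in the earlier constructions.
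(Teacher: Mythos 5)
There is a genuine gap here: you want the kissing configuration to be the set of minimal vectors of a Construction-A lattice, but the arithmetic of $\ell^1$ Construction A does not support an exponential count. Since $q e_1 \in \mathcal{C}+q\mathbb{Z}^n$, the $\ell^1$-minimum of your lattice is at most $q$; to make the minimum linear in $n$ (which you need if the minimal vectors are to have linear support and bounded entries) you are forced to take $q=\Theta(n)$. But for a Gilbert--Varshamov-random code over $\mathbb{Z}_q$ the expected number of weight-$d$ codewords is about $\binom{n}{d}(q-1)^d q^{-(1-R)n}$, while the GV condition needed to exclude lighter codewords forces $q^{(1-R)n}\gtrsim \binom{n}{d}(q-1)^d$; restricting further to codewords all of whose residues have $\ell^1$-cost at most a constant $t$ (i.e.\ lie in $\{\pm1,\dots,\pm t\}$ mod $q$) multiplies the expected count by roughly $(2t/(q-1))^d$, which is exponentially small when $q=\Theta(n)$. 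So a typical such lattice has \emph{no} short vectors coming from codewords beyond the $2n$ vectors $\pm q e_i$, and the configuration is polynomial, not exponential. Structurally, you are trying to prove a lower bound on $\kappa_L(K_n)$, which is a strictly stronger statement than the theorem and one for which no exponential lower bound is known; the paper itself notes that the code-based constructive bounds in the literature (Xu) are weaker than the non-constructive ones.

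The paper's proof (like Larman--Zong's and Talata's before it) avoids lattices entirely. It takes the full shell $\mathcal{S}=\mathcal{X}_{m_1,m_2}\subset\{0,\pm1,\pm2\}^n$ of vectors with exactly $m_1$ entries of absolute value $1$ and $m_2$ of absolute value $2$ (hence constant $\ell^1$-norm $m=m_1+2m_2$), and applies the averaging bound $\kappa_T(K_n)\geq |\mathcal{S}|/|\mathcal{B}(x)|$ with $\mathcal{B}(x)=\{y\in\mathcal{S}:\norm{x-y}_1<m\}$, whose size is independent of $x$ by symmetry; a deletion argument then extracts an exponentially large subset of the shell that is a genuine kissing configuration. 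There is no exact-minimum condition to verify, only a two-parameter entropy estimate for $|\mathcal{B}(x_0)|$, optimized at $(z_1,z_2)=(\tfrac{19}{100},\tfrac{9}{100})$. Your instinct that the gain over Talata comes from enlarging the alphabet from $\{0,\pm1\}$ to $\{0,\pm1,\pm2\}$ is exactly right---that is the paper's key new ingredient---but the coding-theoretic scaffolding you build around it would not carry the weight.
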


Let $m>0$ and let $\mathcal{S}$ be a finite subset of the boundary $\partial(m K_n)$ such that for all $x\in\mathcal{S}$, the cardinality of the set $\mathcal{B}(x)=\{y\in \mathcal{S}:\norm{x-y}_1<m\}$ is independent of the center $x$. This is achieved when $\mathcal{S}$ is the set of signed permutations of one vector $v\in\mathbb{R}^n$. Then by the union bound, \begin{align}
    \kappa_T(K_n)\geq \frac{|\mathcal{S}|}{|\mathcal{B}(x)|}.
\end{align}
Thus, the relevant problem is to choose the set $\mathcal{S}$ such that the probability that two uniformly random $x,x'\in\mathcal{S}$ satisfy $\norm{x-x'}_1<m$, is as small as possible. The results mentioned in the previous paragraph are obtained using the set $\mathcal{S}=\{x\in \{0,\pm 1\}^n:\sum_{i=1}^n |x_i|=m\}$ for a suitable integer $m$. However, since we are working with the $\ell^1$-norm, it is possible to decrease the aforementioned probability by replacing the set $\{0,\pm1\}$ with a larger set. Here we will consider for simplicity the set $\{0,\pm1,\pm 2\}$, which already gives a noticeable improvement.

\begin{proof}

Let $n>0$ and $m_1\geq m_2>0$ be integers to be determined later and define the set
\begin{align*}
    \mathcal{X}_{m_1,m_2}&=\{x\in\{0,\pm1,\pm2\}^n: |x_i|=j\text{ for $m_j$ values of $i$  ($j=1,2$)}\}
\end{align*}
of cardinality $|\mathcal{X}_{m_1,m_2}|=\binom{n}{m_1}\binom{n-m_1}{m_2}2^{m_1+m_2}$. Note that $\norm{x}_1=m_1+2m_2:=m$ for all $x\in \mathcal{X}_{m_1,m_2}$. For $x\in \mathcal{X}_{m_1,m_2}$, let $\mathcal{B}_{m_1,m_2}(x)=\{y\in\mathcal{X}_{m_1,m_2}:\norm{x-y}_1<m\}$. Clearly, $|\mathcal{B}_{m_1,m_2}(x)|$ does not depend on the center $x$ so we may consider the ball centered at $x_0=(1,\dots,1,2,\dots,2,0,\dots,0)\in\mathcal{X}_{m_1,m_2}$. For $x\in\mathcal{X}_{m_1,m_2}$, let
\begin{align*}
    c_1(x)&=|\{1\leq i \leq m_1: x_i=2\}|,\\
    c_2(x)&=|\{m_1+1\leq i \leq m_1+m_2: x_i=2\}|,\\
    c_3(x)&=|\{1\leq i \leq m_1+m_2: x_i=1\}|.
\end{align*}
Then $x\in\mathcal{B}_{m_1,m_2}(x_0)$ if and only if $2c_1(x)+4c_2(x)+2c_3(x)>m$. We obtain the upper bound
\begin{align*}
    |\mathcal{B}_{m_1,m_2}(x_0)|&\leq \sum_{0\leq x_1+x_2\leq m_2}\binom{m_1}{x_1}\binom{m_2}{x_2}\binom{m_1+m_2-x_1-x_2}{h(x_1,x_2)}\\
    &\binom{n-x_1-x_2-h(x_1,x_2)}{m_1-h(x_1,x_2)}\binom{n-m_1-x_1-x_2}{m_2-x_1-x_2}2^{m_1+m_2-x_1-x_2-h(x_1,x_2)}\\
    &:= \sum_{0\leq x_1+x_2\leq m_2} g(x_1,x_2;m_1,m_2),
\end{align*}
where $h(x_1,x_2)=\lfloor\frac{m}{2}-x_1-2x_2\rfloor+1$ is the smallest integer satisfying $2x_1+4x_2+2h(x_1,x_2)>m$. We will consider $m_1,m_2$ satisfying $m_2<\frac{m_1}{2}$, whence $0\leq h(x_1,x_2)\leq m_1$ holds for all admissible $x_1,x_2$.

Now let $m_i=\lfloor n z_i\rfloor$ and $x_i=\lfloor n y_i\rfloor$ for $i=1,2$, where $0< z_1,z_2\leq 1$ are fixed parameters satisfying $z_2<\frac{z_1}{2}$ and $z_1+z_2<1$, and $y_1,y_2\geq 0$ are variables satisfying $0\leq y_1+y_2\leq z_2$. Define
\begin{align*}
f(y_1,y_2;z_1,z_2)=\lim_{n\to\infty} \frac{1}{n}\log_2(g(\lfloor ny_1\rfloor,\lfloor ny_2\rfloor;\lfloor nz_1\rfloor,\lfloor nz_2\rfloor)).
\end{align*}
We can now write down an explicit expression for $f$.
\begin{align*}
    &f(y_1,y_2;z_1,z_2) \\
    &= z_1 H\left(\frac{y_1}{z_1}\right)+z_2H\left(\frac{y_2}{z_2}\right)+(z_1+z_2-y_1-y_2)H\left(\frac{\frac{z_1}{2}+z_2-y_1-2y_2}{z_1+z_2-y_1-y_2}\right)\\
    &+(1+y_2-\frac{z_1}{2}-z_2)H\left(\frac{y_1+2y_2+\frac{z_1}{2}-z_2}{1+y_2-\frac{z_1}{2}-z_2}\right)\\
    &+(1-z_1-y_1-y_2)H\left(\frac{z_2-y_1-y_2}{1-z_1-y_1-y_2}\right)+\frac{z_1}{2}+y_2.
\end{align*}
We take $z_1^*=\frac{19}{100}$ and $z_2^*=\frac{9}{100}$, and find that $\sup_{0\leq y_1+y_2\leq z_2^*} f(y_1,y_2;z_1^*,z_2^*)\approx 1.17029$, attained at $(y_1,y_2)\approx(0.01728,0.04327)$. We conclude that for $m_i^*=\lfloor n z_i^*\rfloor$, 
\begin{align*}
    \liminf_{n\to\infty}&\frac{1}{n}\log_2\left(\kappa_T(K_n)\right) \geq \lim_{n\to\infty} \frac{1}{n}\log_2 |\mathcal{X}_{m_1^*,m_2^*}|-\limsup_{n\to\infty} \frac{1}{n}\log_2 |\mathcal{B}_{m_1^*,m_2^*}(x_0)|\\
    &\geq H(z_1^*)+(1-z_1^*) H\left(\frac{z_2^*}{1-z_1^*}\right)+z_1^*+z_2^*-\sup_{0\leq y_1,y_2\leq z_2^*} f(y_1,y_2;z_1^*,z_2^*)\\
    &\approx 0.218818,
\end{align*}
which shows that $\kappa_T(K_n)\geq 2^{0.21881(1-o(1))n}\geq 1.1637^{(1-o(1))n}$.
\end{proof}

It is interesting to note that Theorem \ref{thm:main} gives a better existence result for the kissing number of the cross-polytope than the best known existence result $\kappa_T(B_n)\geq 2^{0.2075(1+o(1))n}$ for the $\ell^2$-ball mentioned in the introduction \cite{wyner1965capabilities}.

\end{document}